\newtheorem{claim}{Claim}
\newtheorem{cor}{Corollary}
\newtheorem{pro}{Proposition}
\newcommand{\blind}{0}
\begin{document}

\def\spacingset#1{\renewcommand{\baselinestretch}%
{#1}\small\normalsize} \spacingset{1}

%%%%%%%%%%%%%%%%%%%%%%%%%%%%%%%%%%%%%%%%%%%%%%%%%%%%%%%%%%%%%%%%%%%%%%%%%%%%%%

\if0\blind
{
  \title{\bf Hitting a prime in 2.43 dice rolls\\ (on average)}
  \author{Noga Alon\\
    Department of Mathematics, Princeton University\\ Princeton, NJ 08544, USA\\
    and\\
    Schools of Mathematics and
    Computer Science\\ Tel Aviv University\\ Tel Aviv 6997801, Israel\\
    \\
    \\
    Yaakov Malinovsky\thanks{Corresponding author}\\
    Department of Mathematics and Statistics\\ University of Maryland, Baltimore County\\ Baltimore, MD 21250, USA}

  \maketitle
} \fi

\if1\blind
{
  \bigskip
  \bigskip
  \bigskip
  \begin{center}
    {\LARGE\bf Title}
\end{center}
  \medskip
} \fi

%\bigskip
\begin{abstract}
What is the number of rolls of fair $6$-sided dice until the first time
the total sum of all rolls is a prime?
We compute the expectation
and the variance of this random variable up to an additive error of
less than $10^{-4}$. This is a solution to a
puzzle suggested  by \cite{D2017} in the Bulletin of the Institute of Mathematical Statistics,
where the published solution is incomplete.
The proof is simple, combining
a basic dynamic programming algorithm with a quick Matlab
computation and basic facts about the distribution of primes.
\end{abstract}

\noindent%
{\it Keywords: dynamic-programming, prime number theorem, stopping time}
%\noindent%
%{\it MSC2020: 60G40, 90C39, 11A41}
\vfill
%\hfill {\tiny technometrics tex template (do not remove)}

\newpage
\spacingset{1.45} % DON'T change the spacing!

\section{The Problem and Monte-Carlo Simulation}
The following puzzle appears in the Bulletin of the Institute of
Mathematical Statistics \citep{D2017}:
Let $X_1,X_2,\ldots$ be independent uniform random variables
on the integers $1,2,\ldots,6$,
and define $S_n=X_1+\ldots+X_n$ for $n=1,2,\ldots$.
Denote by $\tau$ the discrete time in which $S_n$ first hits
the set of prime numbers $P$:
\begin{equation*}
\label{eq:aaa}
\tau=\min\left\{n\geq 1: S_n \in P\right\}.
\end{equation*}
The contributing editor \citep{D2017} provides a lower bound of
$2.34$ for the expectation $E(\tau)$ and mentions the following heuristic
approximation for it: $E\left(\tau\right)\approx 7.6.$
He also adds that it is unknown whether $\tau$ has a finite variance.

In this note, we compute the value of $E(\tau)$ up to an additive error
of less than $10^{-7}$, showing it is much closer to the
lower bound mentioned above than to $7.6$. We also show the variance
is finite and compute its value up to an additive error of less than
$10^{-4}$. It will be clear from the discussion that it is not
difficult to get a better approximation for both
quantities by increasing the amount of computation performed.
%The proof is very simple, and this note is written mainly
%in order to illustrate the power of combining a simple
%dynamic programming algorithm with a quick computer-aided
%computation and basic facts about the distribution of primes when
%studying problems of this type.

Before describing the rigorous argument, we present in Table 1 below
the outcomes of Monte-Carlo simulations of the process.

\begin{table}[H]
\label{eq:t}
\caption{Monte-Carlo simulations}
\begin{center}
\begin{tabular}{lllllll}
  number of repetitions  & $mean(\tau)$ & $variance(\tau)$ & $max(\tau)$\\
  \hline
  $10^6$     &  2.4316    & 6.2735      & 49\\
  $2\times 10^6$   &  2.4274    & 6.2572      &67\\
  $3\times 10^6$   &  2.4305    & 6.2372      &70\\
  $5\times 10^6$   &  2.4287    & 6.2418      &64\\
  $10^7$     &  2.4286     &6.2463       &65\\
\end{tabular}
\end{center}
\end{table}
We provide Matlab code for the Monte-Carlo simulation in Web Appendix A.

In the next sections, we proceed with a rigorous computation of $E(\tau)$ and $Var(\tau)$
up to an additive error smaller than $1/10,000$. Not surprisingly,
this computation shows that the simulations supply accurate values.

\section{Expectation and Variance of the Hitting Time}

First, we present the formulas for calculating the expectation and variance of the hitting time $\tau$ as a function of the probability that $\tau$ equals or exceeds a certain value $k$, for $k=1,2,3, \ldots$, which we denote by $p(k)=P(\tau\geq k).$
We have
\begin{equation}
\label{eq:E}
E\left(\tau\right)=\sum_{k\geq 1}p(k)
\end{equation}
and
\begin{equation}
\label{eq:EE}
E\left(\tau^2\right)=\sum_{k\geq 1}(2k-1)p(k).
\end{equation}

We remind in Appendix A how to obtain the formulas \eqref{eq:E} and \eqref{eq:EE}.

Obviously, by the definition of variance we have
\begin{equation}
\label{eq:v}
Var(\tau)=E(\tau^2)-\left[E(\tau)\right]^2.
\end{equation}
In section \ref{sec:DP} we develop a
dynamic programming algorithm to compute $p(k)$ exactly and use the first 1000 values ($k=1,\ldots,1000$)
to estimate $E(\tau)$ and $Var(\tau)$ with reference to expressions \eqref{eq:E}, \eqref{eq:EE}, and \eqref{eq:v}.
%Section \ref{sec:UB} provides an upper bound for the
%sums of the remaining terms (for the partial sums in \eqref{eq:E} and \eqref{eq:EE} where k exceeds %$1000$.)

\section{Dynamic Programming Algorithm and Estimates}
\label{sec:DP}
In this section we develop a dynamic programming algorithm to compute
the first $K$ values $p(1), p(2),\ldots, p(K)$, then use these values to estimate $E(\tau)$ and $Var(\tau)$.
\subsection{Dynamic Programming Algorithm}

For each integer $k\geq 1$ and for each non-prime $n$ satisfying
$k\leq n\leq 6k$, let $p(k,n)$ denote the probability that
$X_1+\cdots+X_k=n$ and that for every $i<k$, $X_1+\cdots+X_i$ is non-prime.
Fix a parameter $K$ (in our computation, we later take
$K=1000$).
By the definition of $p(k,n)$ and the rule of total probability,
we have the following dynamic programming (DP) algorithm for computing
$p(k,n)$ precisely for all $1 \leq k \leq K$ and $k \leq n \leq 6k$:
\begin{enumerate}
\item[1.]  $p(1,1)=p(1,4)=p(1,6)=1/6.$
\item[2.]
For $k=2,\ldots,K$ and for any non-prime $n$ between $k$ and $6k$,
\begin{equation}
\label{eq:TP}
p(k,n)=\frac{1}{6}\sum_{i}p(k-1,n-i),
\end{equation}
\end{enumerate}
where the sum ranges over all $i$ between $1$ and $6$ so that
$n-i$ is non-prime.\\
From the definitions of $p(k)$ and $p(k, n)$, we obtain the following identity:
\begin{align}
\label{eq:id}
&
p(k+1)=\sum_{\left\{n:\,\, k \leq n \leq 6k\right\}}
p(k,n).
\end{align}

We apply this DP algorithm to get the values $p(k, n)$ for $k=1,\ldots, K$. Then, using identity \eqref{eq:id}, we obtain the values $p(1), p(2),\ldots, p(K)$, which consequently provide us the partial sums in \eqref{eq:E} and \eqref{eq:EE}. These partial sums are the lower bounds to $E(\tau)$ and $E(\tau^2)$, which allows us to estimate the expectation and variance of the hitting time, which we discuss in the next section.

\subsection{Estimators of the Expectation and Variance of Hitting Time}

Denote by $E_{K}$ and $E^{(2)}_{K}$ the estimators (lower bounds)
of $E(\tau)$ and $E(\tau^2)$
based on the respective values of $p(k)$ for the first $K$ values in \eqref{eq:E} and \eqref{eq:EE}:
\begin{align}
\label{eq:twoM}
&
E_{K}=\sum_{k=1}^{K}p(k)\,\,\,\,\,\text{and}\,\,\,\,\, E^{(2)}_K=\sum_{k=1}^{K}(2k-1)p(k).
\end{align}

We estimate $E(\tau)$ by $E_{K}$ and consequently $Var(\tau)$ by $V_{K}$, defined as follows
\begin{align*}
&
V_K=E^{(2)}_{K}-(E_K)^2.
\end{align*}

Setting $K=1000$ and applying the dynamic programming algorithm in Matlab (provided in Web Appendix B), with
an execution time of less than five seconds, we obtain
$E_{1000}=2.4284$ and $V_{1000}= 6.2427.$

The "quality" of these estimators can be measured as the difference between
$E(\tau)$ and $Var(\tau)$ and their corresponding estimators.
The define these distances (remainders) as
\begin{align}
\label{eq:nR}
&
RE_{K}=E(\tau)-E_K\,\,\,\,\text{and}\,\,\,\, RV_{K}=Var(\tau)-V_K.
\end{align}

In the next section, we bound $RE_K$ and $RV_K$ and show that
for $K=1000$, $RE_{1000}<10^{-7}$  and  $RV_{1000}<10^{-4}.$

\section{Bounding the Remainders}
\label{sec:UB}

In this section, we provide the bounds for the remainder terms defined in \eqref{eq:nR}.
To accomplish this, we will use basic facts about the distribution of primes and
prove the following simple result by induction on $k$.

\begin{pro}
For every $k$ and for every non-prime $n$,
\begin{equation}
\label{eq:cl1}
p(k,n)<\frac{1}{3}\left(\frac{5}{6}\right)^{\pi(n)},
\end{equation}
where $\pi(n)$ is the number of primes smaller than $n$.
\end{pro}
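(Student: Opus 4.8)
The plan is to prove \eqref{eq:cl1} by induction on $k$, feeding the bound for $k-1$ through the recursion for $p(k,n)$. For the base case $k=1$ I would simply check the three nonzero values directly: since $\pi(1)=0,\ \pi(4)=2,\ \pi(6)=3$, we have $p(1,1)=1/6<\tfrac13(5/6)^0$, $p(1,4)=1/6<\tfrac13(5/6)^2$, and $p(1,6)=1/6<\tfrac13(5/6)^3$, while for every other non-prime $n$ we have $p(1,n)=0$ and the bound is trivial.

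For the inductive step, fix a non-prime $n$. If $p(k,n)=0$ (in particular if $n$ is out of the range $k\le n\le 6k$) the bound is immediate, so assume the set $S=\{i:1\le i\le 6,\ n-i\ \text{non-prime}\}$ is nonempty. The recursion gives $p(k,n)=\tfrac16\sum_{i\in S}p(k-1,n-i)$, and the strict inductive hypothesis, applied to each non-prime $n-i$, yields
\[
p(k,n)<\frac{1}{18}\sum_{i\in S}\left(\frac56\right)^{\pi(n-i)}
=\frac{1}{18}\left(\frac56\right)^{\pi(n)}\sum_{i\in S}\left(\frac65\right)^{d_i},
\]
where $d_i=\pi(n)-\pi(n-i)\ge 0$ counts the primes in the interval $[n-i,n-1]$. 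Thus the whole statement reduces to the purely combinatorial inequality $\sum_{i\in S}(6/5)^{d_i}\le 6$: the factor $1/18$ and the bound $6$ then combine to $\tfrac13(5/6)^{\pi(n)}$, and the inequality is strict because the inductive hypothesis is (and $S\neq\emptyset$).

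To prove $\sum_{i\in S}(6/5)^{d_i}\le 6$ I would encode the six integers $n-1,\dots,n-6$ by a binary string $a_1\cdots a_6$, with $a_j=1$ iff $n-j$ is prime, so that $d_i=a_1+\cdots+a_i$ and $S=\{i:a_i=0\}$. The key observation is that, for a fixed number $t=a_1+\cdots+a_6$ of primes, the sum $\sum_{i:a_i=0}(6/5)^{d_i}$ is maximized when all the $1$'s precede all the $0$'s, since pushing a prime earlier only raises the exponents $d_i$ of the zeros that follow it; this extremal arrangement gives $(6-t)(6/5)^t$. It then remains to verify the single-variable fact $\max_{0\le t\le 6}(6-t)(6/5)^t=6$, attained at $t=0$ and $t=1$, which is a finite check.

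I expect the main (though modest) obstacle to be the bookkeeping in the combinatorial step: cleanly justifying that the ``all primes first'' arrangement is extremal, that the exponent $d_i$ indeed equals $\pi(n)-\pi(n-i)=a_1+\cdots+a_i$, and that the boundary conventions ($p(k,n)=0$ outside $k\le n\le 6k$, and $\pi$ counting primes strictly below its argument, so negative or $\le 1$ values of $n-i$ contribute as non-primes) are handled so that the strict inductive hypothesis really delivers the strict inequality in \eqref{eq:cl1}.
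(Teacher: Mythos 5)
Your proof is correct and takes essentially the same route as the paper: induction on $k$ through the dynamic-programming recursion, the same base-case check, and the same key numerical fact, since your finite verification $\max_{0\le t\le 6}(6-t)(6/5)^t=6$ is exactly the paper's inequality $\frac{1}{6}(6-q)\le\left(\frac{5}{6}\right)^{q}$ with $q$ the number of primes in $\{n-6,\dots,n-1\}$. The only real difference is that your rearrangement lemma (primes-first is extremal) is a detour the paper avoids by simply bounding every exponent via $\pi(n-i)\ge\pi(n)-q$, i.e.\ $d_i\le q$, which gives $\sum_{i\in S}(6/5)^{d_i}\le(6-q)(6/5)^q$ at once; your extra care with strictness and out-of-range terms is harmless and in fact slightly more meticulous than the paper's write-up.
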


\begin{proof}
Note first that \eqref{eq:cl1} holds for $k=1$, as $1/6=p(1,6)<(1/3) (5/6)^3$,
$1/6=p(1,4)<(1/3)(5/6)^2$  and $1/6=p(1,1)<(1/3)(5/6)^{0}$, with
room to spare.
Assuming the inequality holds for $k-1$ (and every relevant $n$)
we prove it for $k$. Suppose there are $q$ primes in the set
$\left\{n-6,\ldots,n-1\right\}$, then $\pi(n-i)\geq \pi(n)-q$
for all non-prime $n-i$ in this set. Thus, by the induction hypothesis,
and using \eqref{eq:TP}, we obtain
\begin{align*}
&
p(k,n) \leq \frac{1}{6}(6-q)\frac{1}{3}
\left(\frac{5}{6}\right)^{\pi(n)-q}
\leq \left(\frac{5}{6}\right)^q \frac{1}{3}
\left(\frac{5}{6}\right)^{\pi(n)-q}
=\frac{1}{3} \left(\frac{5}{6}\right)^{\pi(n)}.
\end{align*}
\end{proof}

By the prime number theorem (cf.,  e.g., \cite{HW2008}),
for every $n>1000$
${\displaystyle \pi(n) > 0.9 \frac{n}{\ln n}}$ (again, with room to spare).
Therefore, from  the above estimate, we arrive at the following result.

\begin{cor}
\label{eq:cor1}
For every $k>1000$ and every non-prime $n (n\geq k)$,
\begin{equation*}
\label{eq:cl2}
p(k,n)<\frac{1}{3}\left(\frac{5}{6}\right)^{0.9 \frac{n}{\ln n}}.
\end{equation*}
\end{cor}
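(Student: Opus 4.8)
The plan is to derive the corollary directly from the Proposition together with the quoted consequence of the prime number theorem, with no new induction or case analysis required. First I would invoke the Proposition in full generality: for every $k$ and every non-prime $n$ it gives $p(k,n) < \frac{1}{3}\left(\frac{5}{6}\right)^{\pi(n)}$. Since this bound already holds unconditionally, the only work left is to replace the exponent $\pi(n)$ by the explicit quantity $0.9\,\frac{n}{\ln n}$, which amounts to transferring a lower bound on $\pi(n)$ into the inequality.

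The second step is to verify that the hypothesis of the cited estimate is met. The corollary assumes $k>1000$ and $n\geq k$, so in particular $n\geq k>1000$, and hence $n>1000$. This is exactly the range in which, as stated above via the prime number theorem, $\pi(n) > 0.9\,\frac{n}{\ln n}$ holds (with room to spare). Thus the bookkeeping reduces to the single observation that $n>1000$ is forced by $n\geq k>1000$.

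The final step is a monotonicity remark: because $0<\frac{5}{6}<1$, the map $x\mapsto\left(\frac{5}{6}\right)^{x}$ is strictly decreasing, so the lower bound $\pi(n) > 0.9\,\frac{n}{\ln n}$ on the exponent translates into the upper bound $\left(\frac{5}{6}\right)^{\pi(n)} < \left(\frac{5}{6}\right)^{0.9\,\frac{n}{\ln n}}$. Chaining this with the Proposition yields $p(k,n) < \frac{1}{3}\left(\frac{5}{6}\right)^{\pi(n)} < \frac{1}{3}\left(\frac{5}{6}\right)^{0.9\,\frac{n}{\ln n}}$, which is the claimed inequality \eqref{eq:cl2}. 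There is essentially no obstacle here, since all the substance is contained in the Proposition and in the prime-counting bound; the only point meriting a moment's care is the direction of the inequality under exponentiation, where a base smaller than one turns a lower bound on the exponent into an upper bound on the power.
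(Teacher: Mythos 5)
Your proposal is correct and matches the paper's own (implicit) argument exactly: the paper derives the corollary by combining the Proposition with the prime number theorem bound $\pi(n) > 0.9\,\frac{n}{\ln n}$ for $n>1000$, which applies since $n \geq k > 1000$. Your explicit attention to the monotonicity of $x \mapsto \left(\frac{5}{6}\right)^{x}$ spells out a step the paper leaves tacit, but the route is the same.
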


Corollary \ref {eq:cor1} is the crucial result in obtaining the upper bounds of the remainders $RE_{1000}$ and $RV_{1000}$, which are given in the following proposition.
\begin{pro}
\label{eq:bound}
The remainder terms, which are defined in \eqref{eq:nR}, are bounded as follows:
\begin{align*}
&
{(a)}\,\, RE_{1000}<10^{-7},\\
&
(b)\,\, RV_{1000}<10^{-4}.
\end{align*}
\end{pro}
\begin{proof}
Recall that
\begin{align*}
{\displaystyle P(\tau\geq k+1)=p(k+1)=\sum_{\left\{n:\,\, k \leq n \leq 6k\right\}}
p(k,n).}
\end{align*}
For part (a), we have
\begin{align}
\label{eq:in1}
&
RE_{1000}=\sum_{k>1000}P(\tau\geq k)=\sum_{k>999}P(\tau\geq k+1)
=\sum_{k>999} \sum_{\left\{n:\,\, k \leq n \leq 6k\right\}}
p(k,n) \nonumber\\
&
<
 \sum_{k >999} \sum_{\left\{n:\,\, k \leq n \leq 6k\right\}}
\frac{1}{3} \left(\frac{5}{6}\right)^{0.9 n/ \ln n}
=\sum_{n\geq 1000}\sum_{k=\max(1000,n/6)}^{n}\frac{1}{3}
\left(\frac{5}{6}\right)^{0.9 n/ \ln n}\nonumber \\
 &
< \sum_{n\geq 1000}\sum_{k=1000}^{n}\frac{1}{3}
\left(\frac{5}{6}\right)^{0.9 n/ \ln n}=
 \sum_{n\geq 1000}(n-999)\frac{1}{3}
\left(\frac{5}{6}\right)^{0.9 n/ \ln n},
\end{align}
where the first inequality is obtained from Corollary \ref{eq:cor1}.

Define
\begin{align}
\label{eq:f}
&
f(n)=(n-999)\frac{1}{3} \left(\frac{5}{6}\right)^{0.9 n/ \ln n},
\end{align}
where $n$ is an integer $\geq 1000$.
The first part of Proposition \ref{eq:bound} follows by noting ${\displaystyle \sum_{n\geq 1000}f(n)<10^{-7}}$, shown in Appendix \ref{eq:A}.
\bigskip

For part (b),
\begin{align}
&
R^{(2)}_{1000}:=\sum_{k>1000}(2k-1)P(\tau\geq k)
=\sum_{k>1000} \sum_{\left\{n:\,\, k-1 \leq n \leq 6(k-1)\right\}} (2k-1)p(k-1,n) \nonumber\\
&
<\sum_{k>1000} \sum_{\left\{n:\,\, k-1 \leq n \leq 6(k-1)\right\}}
(2k-1)\left(\frac{5}{6}\right)^{0.9 n/ \ln n}
=
\sum_{n\geq 1000}\frac{1}{3} \left(\frac{5}{6}\right)^{0.9 n/ \ln n}
\sum_{k=\max(1001,n/6+1)}^{n+1}(2k-1)\nonumber \\
&
<
\sum_{n\geq 1000}\frac{1}{3} \left(\frac{5}{6}\right)^{0.9 n/ \ln n}
\sum_{k=1001}^{n+1}(2k-1)
=
\sum_{n\geq 1000}\frac{1}{3} \left(\frac{5}{6}\right)^{0.9 n/ \ln n}
\left[(n+1)^2-1000^2\right],
\label{eq:99}
\end{align}
where the first inequality is also obtained from Corollary \ref{eq:cor1}.

Denote by
\begin{align}
\label{eq:g}
&
g(n)=\left[(n+1)^2-1000^2\right]\frac{1}{3}
\left(\frac{5}{6}\right)^{0.9 n/ \ln n},
\end{align}
where $n$ is an integer $\geq 1000$.
Likewise, the second part of Proposition \ref{eq:bound} follows by noting
${\displaystyle \sum_{n\geq 10,000}g(n)<3.4\times 10^{-68}}$, shown in Appendix \ref{eq:B}.

\noindent
Combining this with \eqref{eq:99}, we obtain

\begin{align}
\label{eq:nB}
&
R^{(2)}_{1000}<\sum_{n=1000}^{9999}g(n)+\sum_{n\geq 10,000}g(n)
< 8.5\times 10^{-5}+3.4\times 10^{-68}<1/10,000.
\end{align}

Now, from \eqref{eq:twoM} and \eqref{eq:nR} it follows that $RV_{K}=R^{(2)}_{K}-2E_K(RE_K)-(RE_K)^2$,
where ${\displaystyle R^{(2)}_{K}:=\sum_{k>K}(2k-1)p(k)}$.
Combining this with \eqref{eq:nB} and part (a) of Proposition \ref{eq:bound}, we conclude
that $RV_{1000}<10^{-4}$, i.e., the error of the variance estimation based on
the first $1000$ values of $k$
is  below $1/10,000$.
\end{proof}
\section{Final Remarks}
The problem considered in this work ties together prime numbers and the random model. Although the solution is tailored to this specific situation, it offers a method for studying problems of this type. Also, this paper can be used as a motivator in upper-level undergraduate or graduate classes by introducing and illustrating the power in combining a simple dynamic programming algorithm with a quick computer-aided
computation and basic facts about the distribution of primes.

\section*{Appendix}
\appendix
\section{}
Recall that the probability $p(k)$ was defined as $p(k)=P(\tau\geq k).$
We obtain \eqref{eq:E} from
\begin{align*}
&
E\left(\tau\right)=\sum_{k\geq 1}\tau P\left(\tau=k\right)=
\sum_{k\geq 1}k \left[P\left(\tau\geq k\right)-P\left(\tau\geq k+1\right)\right]\\
&
=\sum_{k\geq 1}k P\left(\tau\geq k\right)-\sum_{k\geq 1}(k+1) P\left(\tau\geq k+1\right)
+\sum_{k\geq 1}P(\tau\geq k+1)\\
&
=\sum_{k\geq 1}k P\left(\tau\geq k\right)-\sum_{k\geq 0}(k+1) P\left(\tau\geq k+1\right)
+\sum_{k\geq 0}P(\tau\geq k+1)\\
&
=\sum_{k\geq 0}P(\tau\geq k+1)=\sum_{k\geq 1}P(\tau\geq k)=\sum_{k\geq 1}p(k).
\end{align*}
In a similar manner, we obtain \eqref{eq:EE} from
\begin{align*}
&
E\left(\tau^2\right)=\sum_{k\geq 1}k^2 P\left(\tau=k\right)=\sum_{k\geq 1}k^2\left[P\left(\tau\geq k\right)-P\left(\tau\geq k+1\right)\right]\\
&
=\sum_{k\geq 1}k^2 P\left(\tau\geq k\right)-\sum_{k\geq 1}(k+1-1)^2 P\left(\tau\geq k+1\right)\\
&
=\sum_{k\geq 1}k^2 P\left(\tau\geq k\right)-\sum_{k\geq 1}(k+1)^2 P\left(\tau\geq k+1\right)
+\sum_{k\geq 1}(2k+1) P\left(\tau\geq k+1\right)
\\
&
=\sum_{k\geq 1}k^2 P\left(\tau\geq k\right)-\sum_{k\geq 0}(k+1)^2 P\left(\tau\geq k+1\right)
+\sum_{k\geq 0}(2k+1) P\left(\tau\geq k+1\right)
\\
&
=
\sum_{k\geq 0}\left[2(k+1)-1\right] P\left(\tau\geq k+1\right)=\sum_{k\geq 1}(2k-1) P\left(\tau\geq k\right)=\sum_{k\geq 1}(2k-1) p\left(k\right).
\end{align*}

\section{}Recall that the function $f(n)$ was defined in \eqref{eq:f} as
${\displaystyle f(n)=(n-999)\frac{1}{3} \left(\frac{5}{6}\right)^{0.9 n/ \ln n}}.$
%{Proof of ${\displaystyle \sum_{n\geq 1000}f(n)<10^{-7}}$ for $n \geq 1000$}
\begin{claim}
${\displaystyle \sum_{n\geq 1000}f(n)<10^{-7}}.$
\end{claim}
\label{eq:A}
\begin{proof}
We investigate the behavior of the function $f(n)$:
\begin{itemize}
\item[(i)]
For
$n \geq 1000$ the function
$f(n)$ has a unique maximum at $n=1049$. \\
To see this, it suffices to compute $f(n)$ precisely for all
$1000 \leq n \leq 1049$ and verify that the maximum in this range is attained at $n=1049$.
Taking the derivative of $f(n)$ with respect to $n$ yields that the derivative is negative for $n\geq 1049$.

\item[(ii)]
For any $n\geq 1049$, ${\displaystyle f(n+13\ln n)/f(n)<1/2}$.\\
To see this, it suffices to verify that for any $n\geq 1049$,
\begin{align*}
&
\ln\left(f(n+13\ln n)/f(n)\right)
=\ln\left(1+\frac{13\ln n}{n-999}\right)+0.9\ln\left(\frac{5}{6}\right)
\left\{\frac{n+13\ln n}{\ln\left(n+13\ln n\right)}-\frac{n}{\ln(n)}\right\}\\
&
<\ln\left(1+\frac{13\ln 1049}{1049-999}\right)+0.9\ln\left(\frac{5}{6}\right)
\left\{\frac{n+13\ln n}{\ln\left(n+13\ln n\right)}-\frac{n}{\ln(n)}\right\}
:=r(n).
\end{align*}
Taking the derivative of $r(n)$ with respect to $n$ yields that the derivative is negative for any $n\geq 1049$. Also, it is straightforward to verify that $r(1049)<\ln(1/2)$.
Therefore, the function $r(n)$ is strictly decreasing for $n\geq 1049$ and so we obtain the inequality  $r(n)<r(1049)<\ln(1/2)$. The desired inequality follows as the function ${\displaystyle e^z}$ is monotone increasing.
\end{itemize}

From (i), it follows that
\begin{align}
\label{eq:f1}
&\sum_{n=1000}^{1049}f(n)<\sum_{n=1000}^{1049}f(1049)=50f(1049).
\end{align}

From (i) and (ii), it follows that for any $n\geq 1049$,
\begin{align}
\label{eq:In1}
&
f(n+13\ln n)<1/2f(n)<1/2f(1049).
\end{align}
Let ${\displaystyle d_1=\lfloor13\ln 1049\rfloor}$, and ${\displaystyle d_2=\lceil13\ln 1049\rceil}$.
Using the fact that the function $f(n)$ is strictly decreasing for $n\geq 1049$ and repeatedly applying \eqref{eq:In1}, we obtain
\begin{align}
\label{eq:f2}
&
\sum_{n\geq 1050}f(n)=\sum_{j\geq 0}\sum_{n=1050+jd_2}^{1050+jd_2+d_1}f(n)<
\sum_{j\geq 0}d_1 \left[f(1050+jd_2)\right]<
d_1 \sum_{j\geq 0}f(1049+j(13\ln 1049))\nonumber
\\
&
=d_1 \left[f(1049)+f(1049+13\ln 1049)+f(1049+2\times13\ln 1049)+f(1049+3\times13\ln 1049)+\cdots\right]\nonumber
\\
&
<d_1\left[f(1049)+1/2f(1049)+1/4f(1049)+1/8f(1049)+\cdots\right]
=d_1f(1049)\sum_{j\geq 0}\frac{1}{2^j}\nonumber\\
&
<(13\ln 1049)f(1049)\times 2.
\end{align}

Combining \eqref{eq:in1}, \eqref{eq:f1}, and \eqref{eq:f2}, we obtain

\begin{align*}
\label{eq:RE}
&
\sum_{n\geq 1000}f(n)
=\sum_{n=1000}^{1049}f(n) +\sum_{n\geq 1050}f(n)\nonumber
<
50f(1049)+2(13\ln 1049) f(1049)<7\times10^{-8}.
\end{align*}
\end{proof}

\section{}Recall that the function $g(n)$ was defined in \eqref{eq:g} as
%{Proof ${\displaystyle \sum_{n\geq 10,000}g(n)<3.4\times 10^{-68}}$ for $n \geq 1000$}
$g(n)=\left[(n+1)^2-1000^2\right]\frac{1}{3}
\left(\frac{5}{6}\right)^{0.9 n/ \ln n}$.
\begin{claim}
$\sum_{n\geq 10,000}g(n)<3.4\times 10^{-68}.$
\end{claim}
\label{eq:B}
\begin{proof}
We investigate the behavior of the function $g(n)$:
\begin{itemize}
\item[(iii)]
For
$n \geq 10,000$ the function
$g(n)$ has a unique maximum at $n=10,000$. \\
To see this, we verify that the function $g(n)$ is strictly decreasing for $n\geq 10,000$ by taking the derivative of $g(n)$ with respect to $n$, which yields that the derivative is negative.

\item[(iv)]
For any $n\geq 10,000$ ${\displaystyle g(n+13\ln n)/g(n)<1/2}$.\\
To see this, it suffices to verify that for any $n\geq 10,000$,
\begin{align*}
&
\ln\left(g(n+13\ln n)/g(n)\right)
=\ln\left(1+\frac{26 (n+1)\ln n+13^2\ln^2(n)}{n^2-1000^2}\right)\\
&
+0.9\ln\left(\frac{5}{6}\right)
\left\{\frac{n+13\ln n}{\ln\left(n+13\ln n\right)}-\frac{n}{\ln(n)}\right\}\\
&
<\ln\left(1+\frac{26(10,001)\ln 10,000+13^2\ln^2(10,000)}{10,000^2-1000^2}\right)+0.9\ln\left(\frac{5}{6}\right)
\left\{\frac{n+13\ln n}{\ln\left(n+13\ln n\right)}-\frac{n}{\ln(n)}\right\}\\
&
:=l(n).
\end{align*}
Taking the derivative of $l(n)$ with respect to $n$ yields that the derivative is negative for any $n\geq 10,000$. Also, it is straightforward to verify that $l(10,000)<\ln(1/2)$.
Therefore, we obtain the inequality  $l(n)<l(10,000)<\ln(1/2)$. The desired inequality follows as the function ${\displaystyle e^z}$ is monotone increasing.
\end{itemize}

From (iii) and (iv), it follows that for any $n\geq 10,000$,
\begin{equation}\label{eq:I22}
g(n+13\ln n)<1/2g(n)<1/2g(10,000).
\end{equation}
Let ${\displaystyle l_1=\lfloor 13\ln 10,000\rfloor}$, and ${\displaystyle l_2=\lceil 13\ln 10,000\rceil}$.
Using the fact that the function $g(n)$ is strictly decreasing for $n\geq 10,000$, and repeatedly applying \eqref{eq:I22}, we obtain

\begin{align*}
\label{eq:g2}
&
\sum_{n\geq 10,000}g(n)=\sum_{j\geq 0}\sum_{n=10,000+jl_2}^{10,000+jl_2+l_1}g(n)<
\sum_{j\geq 0}l_1 g(10,000+jl_2)
<
\sum_{j\geq 0}l_1 g(10,000+j13\ln 10,000)\\
&
=l_1\left[g(10,000)+g(10,000+13\ln 10,000)+g(10,000+2\times13\ln 10,000)+\cdots\right]\\
&
<l_1g(10,000)\sum_{j\geq 0}\frac{1}{2^j}
<(13\ln 10,000)g(10,000)\times 2<3.4\times 10^{-68}.
\end{align*}

\end{proof}

\section*{Supplementary Materials}
\begin{description}
\item[Web Appendix A] Matlab code for the Monte-Carlo simulation.
\item[Web Appendix B] Matlab code for the dynamic-programming algorithm.
\end{description}

\section*{Acknowledgments}
We wish to thank the Editor, the Associate Editor and the two referees for helpful comments and suggestions.

\section*{Funding}
Research of Noga Alon is supported in part by
NSF grant DMS-2154082 and by BSF grant 2018267.
Research of Yaakov Malinovsky is supported in part by BSF grant 2020063.

\bigskip

\end{document}